\newtheorem{thm}{Theorem}
\newtheorem{prp}{Proposition}
\newtheorem{cor}{Corollary}
\newtheorem{rem}{Remark} 
 \newcommand{\trace}[1]{ \mathbf{Tr} \left( #1 \right) }    
 \newcommand{\E}[1]{ \mathbf{E} \left\{ #1 \right\} } 
 \newcommand{\bV}[1]{ \mathbf{V}} 
 \newcommand{\bX}[1]{ \mathbf{X}}
 \newcommand{\bR}[1]{ \mathbf{R}}
 \newcommand{\bU}[1]{ \mathbf{U}}
 \newcommand{\bW}[1]{ \mathbf{W}}
\title{\LARGE \bf Multi-Objective Optimal Control with Arbitrary Additive and Multiplicative Noise}
\author{Ather Gattami\\ Ericsson Research\\ Stockholm, Sweden\\ ather.gattami@ericsson.com
}
\begin{document}
\maketitle
\begin{abstract}
In this paper, we consider the problem of multi-objective optimal control of a dynamical system
with additive and multiplicative noises with given second moments and \textit{arbitrary} probability distributions.
The objectives are given by quadratic constraints in the state and controller, where the quadratic forms maybe 
indefinite and thus not necessarily convex. We show that the problem can be transformed to a semidefinite program 
and hence convex. The optimization problem is to be optimized with respect to a certain variable serving as the
covariance matrix of the state and the controller. We show that affine controllers are optimal and depend on the optimal covariance matrix. Furthermore, we show that optimal controllers are linear if all the quadratic forms
are convex in the control variable. 
The solutions are presented for both the finite and infinite horizon cases.
We give a necessary and sufficient condition for mean square stabilizability of the dynamical system with additive and multiplicative noises. The condition is a Lyapunov-like condition whose solution is again given by the covariance matrix of the state and the control variable. The results are illustrated with an example.
\end{abstract}

\section*{Notation}
\begin{tabular}{ll}
$\mathbb{S}^n$& The set of $n\times n$ symmetric matrices.\\
$\mathbb{S}^n_{+}$& The set of $n\times n$ symmetric positive\\
& semidefinite matrices.\\
$\mathbb{S}^n_{++}$& The set of $n\times n$ symmetric positive\\
& definite matrices.\\
$\succeq$& $A\succeq B$ $\Longleftrightarrow$ $A-B\in \mathbb{S}^n_{+}$.\\
$\succ$ 	& $A\succ B$ $\Longleftrightarrow$ $A-B\in \mathbb{S}^n_{++}$.\\
$\mathbf{Tr}$& $\mathbf{Tr}(A)$ is the trace of the matrix $A$.\\
$\delta(\cdot)$ & $\delta(0) = 1$, and $\delta(k) = 0$ for $k\neq 0$.\\
$\E \cdot $ & $\E x$ is the expected value\\ & of the random variable $x$.\\
& mean $m$ and covariance $X$.\\
$I_{n}$& Denotes the $n\times n$ identity matrix.\\
\end{tabular}

\section{Introduction}
\subsection{Background}
In this paper, we consider the problem of optimal control of a dynamical system with additive and
multiplicative noises of arbitrary distributions. More precisely, 
the optimal state-feedback control problem of discrete-time dynamical system with additive and multiplicative noises is given by

\begin{equation}
\label{introproblem}
\begin{aligned}
  \inf_{\mu_k}\hspace{2mm}   & \frac{1}{N}
  \sum_{k=0}^{N-1}    \E{z_k^T z_k}\\
\text{s. t.}\hspace{2mm}  x_{k+1} & = \left(A + \sum_{i=1}^M \sigma_k(i) A_i\right)x_k +  B u_k+ w_k \\
							z_k &= Cx_k + Du_k\\
                               u_k&=\mu_k(x_0,...,x_k)\\
\end{aligned}
\end{equation}
where $w_k$ and $\sigma_k(i)$ are independent white noises with unit variance and \textit{arbitrary} probability distribution functions. This is important since we would obtain a control problem whose solutions accounts for nonlinear uncertainty, which is modeled in the distribution of the stochastic noise.

The problem given by (\ref{introproblem}) is \textit{no longer a linear} quadratic control problem and the state is no longer Gaussian, not even if we assume that the noises are Gaussian. This nonlinear model is very important in many applications. 
For instance, multiplicative uncertainty appears on modeling link failures in networked control system (\cite{patterson2010convergence}). Also, this models uncertainty in the state space parameters of a linear dynamical system.

Optimal control of uncertain systems is one of the corner stones of control theory. There is a vast amount of research considering optimal control of linear dynamical systems with a quadratic objective, where the uncertainty is often modeled as additive noise with a Gaussian distribution. It's well known that the optimal state feedback control problem
\begin{equation*}
\begin{aligned}
  \min_{\mu_k}\hspace{2mm}   & \lim_{N\rightarrow \infty}\frac{1}{N}
  \sum_{k=0}^{N-1}    \E{z_k^T z_k}\\
\text{s. t.}\hspace{2mm}  x_{k+1} & = A x_k +  B u_k+ w_k \\
							z_k &= Cx_k + Du_k\\
                               u_k&=\mu_k(x_0,...,x_k)\\
\end{aligned}
\end{equation*}
has linear strategies $\mu_k$ as the optimal solution.
This yields a very nice structure on the optimization problem where an optimal linear controller can be obtained
by solving an optimization problem with polynomial time computational complexity. However, important variations of the linear quadratic optimal control problem are still not well understood. For instance, it's not clear whether linear controllers are optimal for the output feedback case under non Gaussian process noise assumptions and sub-optimality of linear controllers has become a folklore. Another variation of the linear quadratic control problem above is the problem given by (\ref{introproblem}). Although very similar to the linear quadratic control problem, the problem is nonlinear and the state is no longer Gaussian even if we assume Gaussian noises in the system. Therefore, linear controllers are not necessarily optimal.
 
This paper will address the following questions for the above optimal state-feedback control problem: 

\begin{itemize}
	\item[1)] For both a finite horizon $N$ and an infinite time-horizon, are linear controllers optimal  over the class of linear and nonlinear controllers?
	What is the optimal controller for arbitrary distributions of the noise variables? 
	\item[2)] What are the necessary and sufficient conditions for the optimal control problem to have a bounded solution as 
	$N\rightarrow \infty$ and that can be checked in polynomial time?
	\item[3)] What is the optimal controller if we add the (not necessarily convex) quadratic constraints
			$$
			\gamma_j(k) \geq 
                               \mathbf{E}\hspace{1mm}
                               \left[
  				\begin{matrix}
   			 		x_k\\
   				 	u_k
  				\end{matrix}
			\right]^TQ_j\left[
  				\begin{matrix}
    					x_k\\
    					u_k
  				\end{matrix}
			\right], ~~ j = 1, ..., J,
			$$
		for both finite and infinite time horizon $N$?\\
\end{itemize}

Stability of linear systems with additive noise is well studied. For a discrete-time system given by
$$
 x_{k+1}  = Ax_k + Bu_k + w_k,
$$
a necessary and sufficient condition for stabilizability is given by the Lyapunov matrix equation. 
The system above is stabelizable if and only if there exists a positive definite matrix $X$ such that
$$
X = AXA^T + BB^T.
$$
We are interested in finding a "Lyapunov-like" stabilizability criterion to answer our second question.

The last question is important since we may have practical energy constraints on the controller of the form
$$
 0<\gamma_1 \leq \E{u_k^Tu_k} \leq \gamma_2. 
$$
Another constraint could be the requirement of the energy of the controller to be proportional to the energy of the state:
$$
 \E{u_k^Tu_k} \leq c^2\E{x_k^Tx_k} 
$$
$$
\Updownarrow
$$
$$
 \E{u_k^Tu_k - c^2x_k^Tx_k} \leq 0.
 $$
None of the quadratic forms of the constraints above form is positive definite. This justifies the generality of the weights 
$Q_j$ to extend to the indefinite case, implying non convex optimal control problems.

\subsection{Previous Work}
Linear optimal state-feedback control for scalar discrete-time systems with multiplicative noise was considered in \cite{athans:1977} where the so called
\textit{uncertainty threshold principle} was introduced under Gaussian noise assumptions. It was shown that if 
the variance of the multiplicative noise exceeded a certain value, then stability (in the mean square sense) was not possible as the time
horizon tends to infinity. Further study for a special case of multivariate systems was considered in \cite{ku:1977}. 
The stabilization problem for discrete time systems with Gaussian noise was approached using linear matrix inequalities in \cite{li:2005} where
the class of linear controllers was considered. In \cite{primbs:2009}, the problem of finite horizon linear optimal control of discrete-time systems with 
multiplicative noise and convex quadratic constraints was approached using receding horizon control.

For continuous time stochastic systems where the noise is a standard Wiener process, stabilizability conditions in terms of linear matrix inequalities where considered in \cite{boyd:elghaoui:1994} and optimal control with respect to various norms was
considered in \cite{elghaoui:1995} by optimizing over linear controllers using linear matrix inequalities.

Linear optimal state-feedback control of linear systems with possibly indefinite quadratic constraints over an infinite time horizon was considered in \cite{yakubovich:1992} and its generalization to output-feedback control in  \cite{gattami:tac:10}, for both finite and infinite horizons, using a covariance formulation of the optimal control problem. It was also shown in \cite{gattami:tac:10} that linear controllers are optimal over the class of linear and nonlinear controllers.

\subsection{Contributions}
In this paper we answer the questions we posed in the Background section. In particular, we show that affine controllers are optimal and can be obtained by solving a semidefinite program. The solution depends solely on the the second moments of the
noises affecting the system, and thus, independent of the probability distributions of the noises. This shows that the Gaussian noise assumption used, even in classical linear quadratic control theory, is unnecessary.

We also show that linear controllers
are optimal if the quadratic constraints
$$
\gamma_j(k) \geq 
                               \mathbf{E}\hspace{1mm}
                               \left[
  				\begin{matrix}
   			 		x_k\\
   				 	u_k
  				\end{matrix}
			\right]^TQ_j\left[
  				\begin{matrix}
    					x_k\\
    					u_k
  				\end{matrix}
			\right], ~~ j = 1, ..., J,
$$
are convex in the control signal $u_k$(note that the quadratic constraints maybe convex in $u_k$ without being convex in both $x_k$ and $u_k$). The controllers are static, that is, they only depend on the current value of the state and are independent of 
the previously observed states. Furthermore, we show that the system is stabilizable in the mean square sense if and only if there exists a positive definite matrix $V\succeq 0$ such that
$$
FVF^T = 
\left[\begin{matrix} A & B\end{matrix}\right]
V
\left[\begin{matrix} A & B\end{matrix}\right]^T + \sum_{i=1}^M A_i FVF^T A_i^T + I,
$$
where 
$F = \begin{bmatrix} I_n & 0 \end{bmatrix}$ and $n$ is the state dimension. More specifically,
$V$ is the stationary covariance matrix of the vector consisting of the state and the controller,
$$
V = 
\mathbf{E} 
	\left[ \begin{matrix}
    					x_k\\
    					u_k
  				\end{matrix}
			\right]
			\left[
			\begin{matrix}
    					x_k\\
    					u_k
  				\end{matrix}
			\right]^T.
$$



\section{Optimal State Feedback}
Consider a dynamical system given by 
\begin{equation}
	\label{dsys}
		\begin{aligned}
			x_{k+1} &= \left(A + \sum_{i=1}^M \sigma_k(i) A_i\right)x_k + B u_k + w_k\\
				z_k &= Cx_k + Du_k
		\end{aligned},~ x_0 = 0,
\end{equation}
where $\{w_k\}$ and $\{\sigma_k(i)\}$ are independent zero-mean white noises with \textit{arbitrary} 
probability distributions. Without loss of generality, we will assume that 
$\E{w_kw_k^T} = I$ and $\E {\sigma^2_k(i)} = 1$ for all $i$ and $k$. If $\E {\sigma^2_k(i)} = s_k^2(i)$, then we may
replace $A_i$ with $\bar{A}_i = s_k(i) \cdot A_i $ and again solve for the case $\E {\sigma^2_k(i)} = 1$.
We want to find the optimal causal state feedback controller 
$$u_k = \mu_k(x_0, ..., x_k)$$ 
that minimizes the average variance for the sequence $\{z_k\}$:
$$
 \frac{1}{N}\sum_{k=0}^{N-1} \E{z_k^T z_k} \rightarrow \min.
$$
Thus, we want to solve the optimization problem
\begin{equation}
\label{lqproblem}
\begin{aligned}
  \min_{\mu_k}\hspace{2mm}   & \frac{1}{N}
  \sum_{k=0}^{N-1}    \E{z_k^T z_k}\\
\text{s. t.}\hspace{2mm}  x_{k+1} & = \left(A + \sum_{i=1}^M \sigma_k(i) A_i\right)x_k +  B u_k+ w_k \\
							z_k &= Cx_k + Du_k\\
                              \E{\sigma_k(i) w_\ell}&=0, ~\forall i, k, \ell\\
                               \E {w_kw^T_\ell}&=\delta(k-\ell)\cdot I\\
                               \E {\sigma_k(i) \sigma_\ell(i')} &= \delta(i-i')\delta(k-\ell)\\
                               u_k&=\mu_k(x_0,...,x_k)
\end{aligned}
\end{equation}
where $x_k\in \mathbb{R}^n$ and  $u_k\in \mathbb{R}^m$.\\

For simplicity, we will assume that the following matrix is positive definite:
$$
Q = \left[\begin{matrix}
C & D
\end{matrix}\right]^T
\left[\begin{matrix}
C & D
\end{matrix}\right] \succ 0.
$$
Introduce the positive semidefinite matrix
\begin{equation*}
V_k =\left[\begin{matrix}
X_k & R_k\\
R^T_k & U_k
\end{matrix}\right]=\mathbf{E}\left\{\hspace{1mm}
  \left[
    \begin{matrix}
      x_k\\
      u_k
    \end{matrix}
  \right]
  \left[
    \begin{matrix}
      x_k\\
      u_k
    \end{matrix}
  \right]^T\right\}.
\end{equation*}

The system dynamics and the correlation assumptions in (\ref{lqproblem}) implicate the
following recursive equation for the covariance matrices
$\{V_k\}$:

\begin{equation}
\label{Xk}
\begin{aligned}
&X_{k+1}\\ 
&= \E{x_{k+1}x^T_{k+1}}\\
&= \mathbf{E}\hspace{1mm} \left\{\left(Ax_k + \sum_{i=1}^M \sigma_k(i) A_i x_k +  B u_k + w_k\right)\right. \times \\
&\hspace{1.1cm} \left. \left(Ax_k + \sum_{i=1}^M \sigma_k(i) A_i x_k +  B u_k+ w_k\right)^T\right\}\\
&=\mathbf{E}\hspace{1mm}\left\{\left[\begin{matrix} A & B\end{matrix}\right]
\left[
    \begin{matrix}
      x_k\\
      u_k
    \end{matrix}
  \right] 
\left[
    \begin{matrix}
      x_k\\
      u_k
    \end{matrix}
  \right]^T 
\left[\begin{matrix} A & B\end{matrix}\right]^T \right \}\\
& + \mathbf{E}\hspace{1mm}\left\{ \sum_{i=1}^M \sigma^2_k(i) A_i x_k x_k^T A_i^T\right\} + \E{w_kw_k^T} \\
&= \left[\begin{matrix} A & B\end{matrix}\right]
V_k
\left[\begin{matrix} A & B\end{matrix}\right]^T + \sum_{i=1}^M A_i X_k A_i^T + I, \\
\end{aligned}
\end{equation}
where we used that $\E{w_k x_k^T}=0$, $\E{w_k u_k^T}=0$, 
$\sigma_k = (\sigma_k(1), ..., \sigma_k(M))$, $\E{w_k \sigma_k^T}=0$,
$\E{x_k \sigma_k^T}=0$, $\E{u_k \sigma_k^T}=0$.  \\

\begin{thm}
\label{stat}
The optimal value of the optimization problem (\ref{lqproblem}) is equal to the optimal value of the following semidefinite program:

\begin{equation}
	\label{rec}
	\begin{aligned}
		\min_{V_k\succeq 0}\hspace{2mm} &  \frac{1}{N}  \sum_{k=0}^{N-1} 
		\trace{\left[\begin{matrix}
C & D
\end{matrix}\right] V_k
\left[\begin{matrix}
C & D
\end{matrix}\right]^T}\\
		\text{s. t.}\hspace{2mm} 	& FV_{k+1}F^T = \left[\begin{matrix} A & B \end{matrix}\right]
												V_k\left[\begin{matrix} A & B\end{matrix}\right]^T \\
									& \hspace{1.6cm} + \sum_{i=1}^M A_i FV_kF^T A_i^T + I \\
	\end{aligned}
\end{equation}
where 
$F = \begin{bmatrix} I_n & 0 \end{bmatrix}$.\\
\end{thm}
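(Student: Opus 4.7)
The plan is to prove the two inequalities separately. In both directions the cost is recognized as
\[
\E{z_k^T z_k} = \trace{[C\ D]\,V_k\,[C\ D]^T},
\]
because $z_k z_k^T = [C\ D] \begin{bmatrix} x_k \\ u_k \end{bmatrix}\begin{bmatrix} x_k \\ u_k \end{bmatrix}^T [C\ D]^T$ and the trace commutes with expectation. So the problems share an objective; only the feasible sets need to be compared.

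For the direction $\min$(\ref{lqproblem}) $\geq$ $\min$(\ref{rec}), I would take any admissible causal feedback $u_k=\mu_k(x_0,\ldots,x_k)$ and let $V_k$ be the induced second-moment matrix. By construction $V_k\succeq 0$, and the calculation already carried out in (\ref{Xk}) (using the whiteness and the uncorrelatedness of $w_k,\sigma_k$ with $(x_k,u_k)$) shows that $X_{k+1}=FV_{k+1}F^T$ satisfies exactly the recursion in (\ref{rec}). Hence $\{V_k\}$ is feasible for the SDP with the same cost.

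For the reverse direction I would exhibit, from any SDP-feasible sequence $\{V_k\}$ with blocks $V_k=\begin{bmatrix}X_k & R_k\\ R_k^T & U_k\end{bmatrix}$, an affine state-feedback controller whose induced covariances coincide with $V_k$. Specifically, I would set
\[
u_k = R_k^T X_k^{\dagger} x_k + v_k,
\]
where $v_k$ is an independent zero-mean random vector, drawn independently of the history $\{x_0,\ldots,x_k,w_0,\ldots,w_{k-1},\sigma_0,\ldots,\sigma_{k-1}\}$, with covariance $U_k-R_k^T X_k^{\dagger} R_k$ (this is PSD by Schur's complement applied to $V_k\succeq 0$). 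One then verifies by induction on $k$ that the actual $\E\{[x_k^T\ u_k^T]^T[x_k^T\ u_k^T]\}$ equals $V_k$: the off-diagonal and $u_k$-blocks match by construction, and the $x_{k+1}$-block matches because the SDP constraint is exactly the recursion derived in (\ref{Xk}). Because $x_0=0$ and the SDP recursion forces $X_{k+1}\succeq I$, we have $X_k\succ 0$ for $k\geq 1$ so the pseudo-inverse reduces to a genuine inverse; at $k=0$ the degeneracy is harmless since $R_0=0$ and one simply takes $u_0=v_0$.

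The only real obstacle is the realizability step: checking carefully that the chosen affine law actually reproduces the SDP variable at every time, and handling the $k=0$ degeneracy where $X_0=0$. Everything else — the objective identity, feasibility of the induced $V_k$, and the use of Schur's complement to obtain a valid covariance for $v_k$ — is essentially bookkeeping. Once both inequalities are established, the optimal values of (\ref{lqproblem}) and (\ref{rec}) coincide, which is the claim of the theorem.
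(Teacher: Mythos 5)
Your proposal is correct, and its first half is exactly the paper's own proof of this theorem: the paper's argument consists only of the forward direction, namely that any admissible policy induces covariances $\{V_k\}$ satisfying the recursion (via the computation in (\ref{Xk}), with $X_{k+1}=FV_{k+1}F^T$) and that $\E{z_k^Tz_k}=\trace{[\,C~D\,]V_k[\,C~D\,]^T}$, after which it declares the equivalence. The converse (realizability of an SDP-feasible sequence) is not argued inside this proof at all; the paper defers it to Theorem~\ref{finitehorizon}, where precisely your construction appears --- the affine law $u_k=(R_k^\star)^T(X_k^\star)^{-1}x_k+v_k$ with $\E{v_kv_k^T}$ equal to the Schur complement $U_k^\star-(R_k^\star)^T(X_k^\star)^{-1}R_k^\star$ --- together with Proposition~\ref{v} to discard $v_k$. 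So your proof is the paper's Theorem~\ref{stat} and Theorem~\ref{finitehorizon} arguments folded into one, with two refinements the paper glosses over: the pseudo-inverse/induction treatment of the degenerate step $k=0$ (where $X_0=0$), and an explicit verification that the realized second moments match block by block. One small caveat you share with the paper but could tighten: the admissible class in (\ref{lqproblem}) is $u_k=\mu_k(x_0,\dots,x_k)$, whereas your realizing controller is randomized through $v_k$; to close the equality for the stated class you should either invoke the Proposition~\ref{v}-type monotonicity argument (dropping $v_k$ cannot increase the cost, and the resulting linear policy's covariances are again feasible), or note that averaging the cost over realizations of $v_k$ shows some fixed (deterministic, affine) choice does at least as well. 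This is a one-line patch, not a gap in the main idea.
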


\begin{proof} The constraints in (\ref{lqproblem}) give rise to the equality constraint (\ref{Xk}).
For the left hand side of Equation (\ref{Xk}), we have that
$$
X_{k+1} = FV_{k+1} F^T,
$$
which gives the equality constraint in (\ref{rec}).
Also, we have that
\begin{equation*}
	\begin{aligned}
		z_k^T  z_k 	&= \trace{z_k  z_k^T} \\
					&= \trace{
							\left[\begin{matrix} C & D\end{matrix}\right]
								\left[  \begin{matrix}
      										x_k\\
      										u_k
    								\end{matrix}\right]
 								 \left[ \begin{matrix}
      										x_k\\
     										 u_k
    								\end{matrix}\right]^T
							\left[\begin{matrix} C & D\end{matrix}\right]^T 	
						}.
	\end{aligned}
\end{equation*}
Hence,
\begin{equation*}
	\begin{aligned}
&\E{z_k^T z_k} \\	
		&= \E {\trace{
							\left[\begin{matrix} C & D\end{matrix}\right]
								\left[  \begin{matrix}
      										x_k\\
      										u_k
    								\end{matrix}\right]
 								 \left[ \begin{matrix}
      										x_k\\
     										 u_k
    								\end{matrix}\right]^T
							\left[\begin{matrix} C & D\end{matrix}\right]^T 	
						}
				}\\
			&=\trace{\E{
							\left[\begin{matrix} C & D\end{matrix}\right]
								\left[  \begin{matrix}
      										x_k\\
      										u_k
    								\end{matrix}\right]
 								 \left[ \begin{matrix}
      										x_k\\
     										 u_k
    								\end{matrix}\right]^T
							\left[\begin{matrix} C & D\end{matrix}\right]^T 	
						}
				}\\
			& = \trace{
						\left[\begin{matrix} C & D\end{matrix}\right]
												V_k \left[\begin{matrix} C & D\end{matrix}\right]^T 
						}.
	\end{aligned}
\end{equation*}

Thus, the objective to be minimized is given by
{\small
$$
 \frac{1}{N}\sum_{k=0}^{N-1} \E{z_k^T z_k} = 
 \frac{1}{N}\sum_{k=0}^{N-1}    	\trace{
						\left[\begin{matrix} C & D\end{matrix}\right]
												V_k \left[\begin{matrix} C & D\end{matrix}\right]^T 
						},
$$
}
and we are done.\\
\end{proof}

Before proceeding to the next result, we need the following proposition.\\

\begin{prp}
\label{v}
Consider  the dynamical systems given by  
$$
x_{k+1} = \left(A + \sum_{i=1}^M \sigma_k(i) A_i\right)x_k + B u_k + Bv_k + w_k,
$$
$$
x'_{k+1} = \left(A + \sum_{i=1}^M \sigma_k(i) A_i\right)x'_k + B u_k + w_k,
$$
$$
x''_{k+1} = \left(A + \sum_{i=1}^M \sigma_k(i) A_i\right)x''_k + Bv_k
$$
Let
$v_k$ be uncorrelated with $w_l$ and $u_l$ for all $k$ and $l$. Also, let 
$$\E{v_kv_k^T} = \Pi_k,$$ 
and
$$
\E{w_k w_k^T} = W_k,
$$
for $k=0, ..., N$. Then, $x_k = x'_k + x''_k$ and
$$
V_k = 
\mathbf{E} 
	\left[ \begin{matrix}
    					x_k\\
    					u_k
  				\end{matrix}
			\right]
			\left[
			\begin{matrix}
    					x_k\\
    					u_k
  				\end{matrix}
			\right]^T
 \succeq 
 \mathbf{E} 
	\left[ \begin{matrix}
    					x'_k\\
    					u_k
  				\end{matrix}
			\right]
			\left[
			\begin{matrix}
    					x'_k\\
    					u_k
  				\end{matrix}
			\right]^T
 = V'_k.
$$
\end{prp}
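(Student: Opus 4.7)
My approach is to (i) establish the pathwise decomposition $x_k = x'_k + x''_k$ by superposition, (ii) expand $V_k$ with respect to that decomposition, and (iii) show that every cross term vanishes because $v_k$ is uncorrelated with the other random quantities appearing in the expansion.

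First, I would argue by induction on $k$ and linearity of the dynamics in the state that $x_k = x'_k + x''_k$. With $x_0 = x'_0 = x''_0 = 0$ the base case is immediate; for the inductive step, adding the second and third recursions gives
\begin{equation*}
x'_{k+1} + x''_{k+1} = \left(A + \sum_{i=1}^M \sigma_k(i) A_i\right)(x'_k + x''_k) + B u_k + B v_k + w_k,
\end{equation*}
which coincides with the first recursion for $x_{k+1}$ under the inductive hypothesis.

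Second, plugging $x_k = x'_k + x''_k$ into the definition of $V_k$ and expanding produces
\begin{equation*}
V_k - V'_k = \E{\begin{bmatrix} x''_k(x''_k)^T & 0 \\ 0 & 0 \end{bmatrix}} + \E{\begin{bmatrix} x'_k(x''_k)^T + x''_k(x'_k)^T & x''_k u_k^T \\ u_k (x''_k)^T & 0 \end{bmatrix}}.
\end{equation*}
The first term is manifestly positive semidefinite, so it suffices to show that the second (cross-term) matrix is zero. To that end I would unroll the recursion as $x''_k = \sum_{j=0}^{k-1} \Phi_{k,j+1} B v_j$, where $\Phi_{k,j+1}$ is a matrix polynomial in the multiplicative noises $\{\sigma_l\}_{l\ge j+1}$ only. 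Each entry of the cross-term matrix is then a finite sum of expectations of the form $\E{Y\, v_j^T M}$ with $Y \in \{x'_k,\, u_k\}$ and $M$ a random matrix built from $B$ and $\Phi_{k,j+1}$. Since $v_j$ has mean zero and is uncorrelated with $Y$ and $M$, each such expectation is zero, leaving $V_k - V'_k \succeq 0$.

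The main obstacle is the last step: the statement only asserts pairwise uncorrelatedness of $v_k$ with $w_l$ and $u_l$, whereas the splitting of $\E{Y v_j^T M}$ into a product of expectations really requires $v_j$ to be independent of the pair $(Y, M)$, i.e. of the whole driving process $(w, u, \sigma)$. This is the standard reading of ``uncorrelated white noises'' in this context, and under it the cross terms drop out cleanly and the desired inequality $V_k \succeq V'_k$ follows.
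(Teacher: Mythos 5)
Your proof follows essentially the same route as the paper's: verify $x_k = x'_k + x''_k$ by superposition, expand $V_k$, kill the cross terms $\E{x'_k (x''_k)^T}$ and $\E{u_k (x''_k)^T}$, and observe the leftover block $\E{x''_k (x''_k)^T}$ is positive semidefinite. Your closing remark is also well taken: the paper likewise needs $v_k$ to be independent of (or at least uncorrelated with products involving) the multiplicative noises $\sigma_l(i)$ and the process $x'_k$, not merely pairwise uncorrelated with $w_l$ and $u_l$, and indeed it imposes exactly this independence when invoking the proposition in Theorem \ref{finitehorizon}.
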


\begin{proof}
The relation $x_k = x'_k + x''_k$ is easy to verify by summing up the right and left hand  sides of the
systems equations above for $x'_k$  and $x''_k$ . Now since $v_k$ is uncorrelated with $w_l$ and $u_l$ for all $k$ and $l$, 
we have that $\E{ x'_k (x''_k)^T} = 0$, $\E {u_k (x''_k)^T} = 0$, and
\begin{align*}
V_k &= 
\mathbf{E} 
	\left[ \begin{matrix}
    					x_k\\
    					u_k
  				\end{matrix}
			\right]
			\left[
			\begin{matrix}
    					x_k\\
    					u_k
  				\end{matrix}
			\right]^T\\
	&= 
 \mathbf{E} 
	\left[ \begin{matrix}
    					x'_k + x''_k\\
    					u_k
  				\end{matrix}
			\right]
			\left[
			\begin{matrix}
    					x'_k + x''_k\\
    					u_k
  				\end{matrix}
			\right]^T	\\
	&=
	\mathbf{E} 
	\left[ \begin{matrix}
    					x'_k\\
    					u_k
  				\end{matrix}
			\right]
			\left[
			\begin{matrix}
    					x'_k\\
    					u_k
  				\end{matrix}
			\right]^T + \E {x''_k(x''_k)^T}\\
	& \succeq 
 \mathbf{E} 
	\left[ \begin{matrix}
    					x'_k\\
    					u_k
  				\end{matrix}
			\right]
			\left[
			\begin{matrix}
    					x'_k\\
    					u_k
  				\end{matrix}
			\right]^T 
 = V'_k.
\end{align*} 
\end{proof}

\begin{rem}
The above proposition clearly shows that the controller doesn't benefit from adding additional noise, 
$u_k \mapsto u_k + v_k$,
if the objective to be minimized is increasing in the covariance of the state and controller.  
\end{rem}

The next result shows how to obtain the optimal controller from the optimal covariance matrices.

\begin{thm}
\label{finitehorizon}
Let  
$$
V_k^\star = \left[\begin{matrix}
			X_k^\star & R_k^\star\\
			(R_k^\star)^T & U_k^\star
		\end{matrix}\right]
$$
be a solution to

\begin{equation}
	\label{optcon}
	\begin{aligned}
		\min_{V_k\succeq 0}\hspace{2mm} & \frac{1}{N}\sum_{k=0}^N
		\trace{\left[\begin{matrix}
C & D
\end{matrix}\right] V_k
\left[\begin{matrix}
C & D
\end{matrix}\right]^T}\\
		\text{s. t.}\hspace{2mm} 	& FV_{k+1}F^T = \left[\begin{matrix} A & B\end{matrix}\right]
												V_k\left[\begin{matrix} A & B\end{matrix}\right]^T \\
									& \hspace{1.65cm} + \sum_{i=1}^M A_i FV_kF^T A_i^T + I \\
	\end{aligned}
\end{equation}
with $F = \begin{bmatrix} I_n & 0 \end{bmatrix}$. Then, an optimal solution to the optimization problem (\ref{lqproblem}) 
is given by
$$
u_k = (R_k^\star)^T (X_k^\star)^{-1} x_k.\\
$$
\end{thm}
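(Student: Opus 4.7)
The plan is to exhibit the linear feedback $u_k = K_k x_k$ with $K_k := (R_k^\star)^T (X_k^\star)^{-1}$ as a feasible strategy for (\ref{lqproblem}) whose objective value equals the optimal value of the SDP (\ref{optcon}); by Theorem \ref{stat} this value is also the optimum of (\ref{lqproblem}), which would finish the proof.

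First I would compute the joint second moment produced by this controller. Since $u_k$ is a deterministic function of $x_k$,
$$
V_k^L = \begin{bmatrix} I \\ K_k \end{bmatrix} X_k^L \begin{bmatrix} I & K_k^T \end{bmatrix},\qquad X_k^L := \E{x_k x_k^T},
$$
and the derivation that produced (\ref{Xk}) applies verbatim to the resulting closed-loop system, so $V_k^L$ automatically satisfies the SDP recursion in (\ref{optcon}) and is therefore feasible.

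Next I would introduce the Schur complement $S_k^\star := U_k^\star - (R_k^\star)^T (X_k^\star)^{-1} R_k^\star \succeq 0$ of $V_k^\star$. Using the identities $K_k X_k^\star = (R_k^\star)^T$ and $X_k^\star K_k^T = R_k^\star$, a direct expansion of the quadratic forms yields
\begin{align*}
\left[\begin{matrix} A & B\end{matrix}\right] V_k^\star \left[\begin{matrix} A & B\end{matrix}\right]^T &= (A + BK_k) X_k^\star (A + BK_k)^T + B S_k^\star B^T,\\
\left[\begin{matrix} C & D\end{matrix}\right] V_k^\star \left[\begin{matrix} C & D\end{matrix}\right]^T &= (C + DK_k) X_k^\star (C + DK_k)^T + D S_k^\star D^T.
\end{align*}
The terms $B S_k^\star B^T$ and $D S_k^\star D^T$ represent the ``excess'' incurred by a potentially non-linear optimizer, and by the remark following Proposition \ref{v} one expects the linear controller to avoid them. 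Subtracting the recursion for $X_{k+1}^L$ from the SDP constraint applied to $V_k^\star$, and doing an induction on $k$ starting from $X_0^L = X_0^\star = 0$ (using that $\sum_i A_i(\cdot)A_i^T$ is monotone in its argument), gives $X_k^L \preceq X_k^\star$ for every $k$.

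Combining $X_k^L \preceq X_k^\star$ with the second identity above and monotonicity of $\mathbf{Tr}$ yields
$$
\trace{\left[\begin{matrix} C & D\end{matrix}\right] V_k^L \left[\begin{matrix} C & D\end{matrix}\right]^T} \leq \trace{\left[\begin{matrix} C & D\end{matrix}\right] V_k^\star \left[\begin{matrix} C & D\end{matrix}\right]^T}
$$
for each $k$, so the SDP objective at $V_k^L$ does not exceed its minimum at $V_k^\star$; feasibility of $V_k^L$ forces the reverse inequality, and hence equality, so by Theorem \ref{stat} the linear controller attains the optimum of (\ref{lqproblem}). I expect the only real technical step to be the Schur-complement identification of $S_k^\star$ and the induction that propagates $X_k^L \preceq X_k^\star$ through the multiplicative-noise recursion; everything else is bookkeeping. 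A subtlety to address is the invertibility of $X_k^\star$ in the formula for $K_k$, which can be handled either by a positivity assumption on the initial data or by replacing $(X_k^\star)^{-1}$ with the Moore--Penrose pseudoinverse restricted to the range of $R_k^\star$.
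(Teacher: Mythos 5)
Your argument is correct, but it is not the paper's argument. The paper proves Theorem \ref{finitehorizon} by synthesis: it forms the affine controller $u_k=(R_k^\star)^T(X_k^\star)^{-1}x_k+v_k$ with injected noise $v_k$ whose covariance equals the Schur complement $U_k^\star-(R_k^\star)^T(X_k^\star)^{-1}R_k^\star\succeq 0$ (your $S_k^\star$), checks --- by the induction on $\E{x_kx_k^T}$ that you spell out but the paper compresses into ``it's easy to verify'' --- that this controller reproduces the optimal covariance sequence $\{V_k^\star\}$ and hence attains the optimum of (\ref{lqproblem}) via Theorem \ref{stat}, and then invokes Proposition \ref{v} to argue that deleting $v_k$ cannot increase the cost. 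You never realize $V_k^\star$ at all: you analyze the pure linear feedback directly, use the Schur-complement identities $\left[\begin{matrix} A & B\end{matrix}\right]V_k^\star\left[\begin{matrix} A & B\end{matrix}\right]^T=(A+BK_k)X_k^\star(A+BK_k)^T+BS_k^\star B^T$ (and the $C,D$ analogue), run a monotone induction to get $X_k^L\preceq X_k^\star$, and then sandwich the linear controller's cost between the SDP optimum and the value at a feasible point. Your route buys two things: it bypasses Proposition \ref{v}, whose statement assumes a common exogenous input $u_k$ shared by both systems and therefore fits the feedback setting only after the closed-loop reinterpretation that your induction carries out explicitly, and it makes the omitted verification rigorous. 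What the paper's construction buys in exchange is that it transfers verbatim to the constrained problem of Theorem \ref{constrainedfinitehorizon}, where $v_k$ can no longer be discarded (with indefinite $Q_j$, shrinking the covariance may violate a constraint), whereas your one-sided comparison relies on monotonicity of the unconstrained objective and does not extend there. Your closing caveat about invertibility of $X_k^\star$ (indeed $X_0^\star=0$ since $x_0=0$) points at a genuine gap, but it is one the paper's own statement shares, and the pseudoinverse repair you suggest is the standard one.
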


\begin{proof}
Since $V_k^\star$ is positive semidefinite, the Schur complement of $X^\star$ in $V^\star$ is also positive semidefinite:
$$
U_k^\star - (R_k^\star)^T (X_k^\star)^{-1} (R_k^\star)=: \Pi_k^\star \succeq 0.
$$
Consider the controller
$$
u_k = (R_k^\star)^T (X_k^\star)^{-1} x_k + v_k,
$$
where 
$$\E{v_kv_k^T} =  \Pi_k^\star$$ 
and $v_k$ is independent of $\sigma_l(i)$, $x_l$, and $w_l$, for all $i$, $k$, and $l$. 
It's easy to verify that the controller
gives the covariance sequence $\{V_k^\star\}$, and thus achieves the minimal solution
of (\ref{lqproblem}) according to Theorem \ref{stat}:
\begin{align*} 
	&\mathbf{E} 
		\left[ \begin{matrix}
    					x_k\\
    					(R_k^\star)^T (X_k^\star)^{-1} x_k + v_k
  				\end{matrix}
			\right]
			\left[
			\begin{matrix}
    					x_k\\
    					(R_k^\star)^T (X_k^\star)^{-1} x_k  +v_k
  				\end{matrix}
			\right]^T\\
	&= 
		\left[ \begin{matrix}
    					X_k^\star 		& R_k^\star\\
    					(R_k^\star)^T & (R_k^\star)^T (X_k^\star)^{-1}  R_k^\star + \Pi_k^\star
  				\end{matrix}
			\right]\\
	&= V_k^\star.
\end{align*}
 But $v_k$ is just additional noise and by removing it from the controller 
we will not increase the objetive value according to Proposition \ref{v}. Thus, an optimal controller is given by  
$$
u_k = (R_k^\star)^T (X_k^\star)^{-1} x_k,
$$
and the proof is complete.\\
\end{proof}

\begin{thm}
\label{stat2}
The optimal value of  the optimization 
problem (\ref{lqproblem}) as $N\rightarrow \infty$ is equal to the value of the optimization problem

\begin{equation}
	\label{optcon}
	\begin{aligned}
		\min_{V\succeq 0}\hspace{2mm} & \trace{\left[\begin{matrix}
C & D
\end{matrix}\right] V
\left[\begin{matrix}
C & D
\end{matrix}\right]^T}\\
		\text{s. t.}\hspace{2mm} 	& FVF^T = \left[\begin{matrix} A & B\end{matrix}\right]
												V\left[\begin{matrix} A & B\end{matrix}\right]^T \\
									& \hspace{1.1cm} + \sum_{i=1}^M A_i FVF^T A_i^T + I \\
	\end{aligned}
\end{equation}
with $F = \begin{bmatrix} I_n & 0 \end{bmatrix}$. Furthermore, if
$$
V^\star = \left[\begin{matrix}
			X^\star & R^\star\\
			(R^\star)^T & U^\star
		\end{matrix}\right]
$$
is an optimal solution to (\ref{optcon}), then $X^\star\succ 0$ and an optimal solution to the optimization 
problem (\ref{lqproblem}) is given by
$$
u_k = (R^\star)^T (X^\star)^{-1} x_k
$$
as $N\rightarrow \infty$.\\
\end{thm}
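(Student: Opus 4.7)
The strategy is a standard matching of upper and lower bounds, combined with a Cesàro-type averaging argument to relate the finite-horizon semidefinite program of Theorem~\ref{stat} to its stationary analogue (\ref{optcon}), followed by the same controller extraction idea used in Theorem~\ref{finitehorizon}.

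For the upper bound, I would observe that whenever $V$ is feasible for (\ref{optcon}), the constant sequence $V_k\equiv V$ automatically satisfies the recursion of (\ref{rec}): both sides of the equality constraint reduce to $FVF^T$. Hence for every $N$ the finite-horizon optimum is no larger than $\trace{[C\ D]V[C\ D]^T}$, and taking $V=V^\star$ gives $\limsup_{N\to\infty} J_N \leq J_\infty$, where $J_N$ and $J_\infty$ denote the values of (\ref{lqproblem}) and (\ref{optcon}) respectively.

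For the lower bound I would use time-averaging. Given any feasible sequence $\{V_k\}_{k=0}^{N}$ for (\ref{rec}), set $\bar V_N = \frac{1}{N}\sum_{k=0}^{N-1} V_k \succeq 0$. Averaging the recursion gives
$$
F\bar V_N F^T = [A\ B]\bar V_N [A\ B]^T + \sum_{i=1}^M A_i F\bar V_N F^T A_i^T + I - \tfrac{1}{N}\bigl(FV_N F^T - FV_0 F^T\bigr).
$$
Since the objective is linear in $V_k$, $\bar V_N$ attains exactly the same average cost. Along any sequence for which $J_N$ stays bounded, $\trace{X_N}/N\to 0$ (otherwise the cost itself would diverge, by positive definiteness of $Q$), so any limit point $\bar V_\infty$ of $\{\bar V_N\}$ is feasible for (\ref{optcon}) and achieves $\liminf_N J_N$. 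Thus $J_\infty \leq \liminf_N J_N$, and combining with the upper bound both coincide.

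Turning to the controller, strict positivity of $X^\star$ comes for free: reading the top-left $n\times n$ block of the stationary constraint gives $X^\star = [A\ B]V^\star[A\ B]^T + \sum_i A_i X^\star A_i^T + I \succeq I$, so $(X^\star)^{-1}$ exists. Define $K=(R^\star)^T(X^\star)^{-1}$ and, as in the proof of Theorem~\ref{finitehorizon}, consider $u_k = Kx_k + v_k$ with $v_k$ an independent zero-mean noise of covariance $\Pi^\star = U^\star - (R^\star)^T(X^\star)^{-1}R^\star \succeq 0$. A direct computation shows that the joint covariance of $(x_k,u_k)$ under this policy satisfies the same recursion whose unique bounded solution (given the Lyapunov-like stationary equation) is the constant sequence $V_k\equiv V^\star$, hence yields the optimum average cost. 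Proposition~\ref{v} then lets me discard $v_k$ without increasing the cost, leaving the static linear law $u_k=(R^\star)^T(X^\star)^{-1}x_k$.

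The hardest step is the middle of the previous paragraph: justifying that the closed-loop covariance of $(x_k,u_k)$ under $u_k=Kx_k$ actually converges to $V^\star$ rather than to some other stationary point of the closed-loop Lyapunov recursion, and in particular that mean-square stability of $A+BK+\sum_i \sigma_k(i)A_i$ is automatic from the existence of a feasible $V^\star$ with $X^\star\succeq I$. I would argue this by interpreting the stationary constraint as a Lyapunov certificate for the closed-loop system (anticipating the stabilizability criterion announced in the introduction) and noting that uniqueness of the bounded solution of the induced linear matrix recursion yields $V_k\to V^\star$ for arbitrary initial conditions, including $x_0=0$.
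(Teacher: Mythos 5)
Your proposal is correct in substance and shares the paper's covariance/Ces\`{a}ro-averaging philosophy, but it diverges from the paper's proof in both halves. For the equality of values, the paper simply asserts that both sides of the finite-horizon recursion have asymptotic averages $FVF^T$ etc.\ and passes to the limit, whereas you split the claim into an upper bound (any stationary feasible $V$ yields a constant feasible sequence for the finite-horizon SDP) and a lower bound via Ces\`{a}ro averages; this is more careful, with one caveat: bounded average cost only gives $\liminf_N \mathbf{Tr}(X_N)/N=0$, not $\mathbf{Tr}(X_N)/N\to 0$, so you must pass to a subsequence along which the cost approaches its liminf \emph{and} the boundary term vanishes, or instead use $X_0=0$, $X_N\succeq 0$ to relax the averaged equality to $F\bar V_NF^T \preceq [A\ B]\bar V_N[A\ B]^T+\sum_i A_iF\bar V_NF^TA_i^T+I$ and argue the inequality-constrained SDP has the same value. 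For the controller, the paper invokes Theorem \ref{finitehorizon} and claims, without justification, that $\frac{1}{N}\sum_k V_k^\star\to V^\star$ and hence that the time-varying gains $(R_k^\star)^T(X_k^\star)^{-1}$ converge to the stationary gain; you instead work directly with $K=(R^\star)^T(X^\star)^{-1}$, derive $X^\star\succeq I$ from the stationary equation (a point the paper only asserts), and reduce the matter to closed-loop convergence of the covariance to $V^\star$, discarding $v_k$ by Proposition \ref{v}. The step you flag as hardest does go through: writing the closed-loop recursion as $X_{k+1}=\mathcal{L}(X_k)+W$ with $\mathcal{L}(X)=(A+BK)X(A+BK)^T+\sum_{i}A_iXA_i^T$ and $W=B\Pi^\star B^T+I\succ 0$, the existence of a positive definite fixed point forces the spectral radius of the positive linear map $\mathcal{L}$ below one, giving mean-square stability and convergence from $x_0=0$; spelling this out would make your argument complete and, on these convergence points, more rigorous than the paper's own proof.
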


\begin{proof} Theorem \ref{stat} and the assumption that
$$
\left[\begin{matrix}
C & D
\end{matrix}\right]^T
\left[\begin{matrix}
C & D
\end{matrix}\right] \succ 0
$$
implies that the solution of the optimization problem (\ref{lqproblem}) is bounded if and only if
$$
\lim_{N\rightarrow \infty} \frac{1}{N} \sum_{k=0}^{N-1} V_k
$$
is bounded according to 
Thus, the value of (\ref{optcon}) is finite if and only if
the value of the optimization problem (\ref{lqproblem}) is finite.
The left hand side of Equation (\ref{rec}) has the asymptotic average
$FVF^T$. The right hand side of Equation (\ref{rec}) has the asymptotic average
$$
\left[\begin{matrix} A & B\end{matrix}\right]
V
\left[\begin{matrix} A & B\end{matrix}\right]^T + \sum_{i=1}^M A_i FVF^T A_i^T + I, 
$$
and we get the equality constraint in (\ref{optcon}).
Also, the objective to be minimized is given by
$$
 \lim_{N\rightarrow \infty}\frac{1}{N}\sum_{k=0}^{N-1} \E{z_k^T z_k} = \trace{
						\left[\begin{matrix} C & D\end{matrix}\right]
												V \left[\begin{matrix} C & D\end{matrix}\right]^T 
						}.
$$
According to Theorem \ref{finitehorizon}, an optimal sequence $\{V_k^\star\}$ 
is obtained for the controller $u_k = (R_k^\star)^T (X_k^\star)^{-1} x_k$. Thus,  
$$
 \frac{1}{N}\sum_{k=0}^{N-1} V_k^\star \rightarrow V^\star 
$$
as $N\rightarrow \infty$, which implies that
$$
(R_k^\star)^T (X_k^\star)^{-1}  \rightarrow  (R^\star)^T (X^\star)^{-1} 
$$
as $N\rightarrow \infty$. This completes the proof.\\
\end{proof}
{
As a corollary, we get a Lyapunov-like necessary and sufficient condition for mean square stabilizability 
of the stochastic system.\\
 }
 
\begin{cor}
The system 
$$
x_{k+1}  = \left(A + \sum_{i=1}^M \sigma_k(i) A_i\right)x_k +  B u_k+ w_k 
$$
is stabilizable if and only if there exists a matrix $V\succeq 0$ such that
$$
FVF^T =
\left[\begin{matrix} A & B\end{matrix}\right]
V
\left[\begin{matrix} A & B\end{matrix}\right]^T + \sum_{i=1}^M A_i FVF^T A_i^T + I,
$$
where 
$F = \begin{bmatrix} I_n & 0 \end{bmatrix}$.
\end{cor}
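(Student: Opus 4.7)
My plan is to read the corollary as asserting feasibility of the SDP (\ref{optcon})---the stated equation is exactly its equality constraint---and to combine Theorem \ref{stat2} with an explicit closed-loop Lyapunov construction for the two directions.

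For sufficiency, given $V\succeq 0$ satisfying the equation, I would partition $V=\left[\begin{matrix} X & R \\ R^T & U\end{matrix}\right]$ with $X=FVF^T$. The equation immediately forces $X\succeq I\succ 0$, since every other term on the right is positive semidefinite. Setting $K:=R^TX^{-1}$ and $\Pi:=U-R^TX^{-1}R\succeq 0$ (the Schur complement of $X$ in $V$), the decomposition
$$V=\left[\begin{matrix} I\\K\end{matrix}\right]X\left[\begin{matrix} I & K^T\end{matrix}\right]+\left[\begin{matrix} 0 & 0\\0 & \Pi\end{matrix}\right]$$
rewrites the equation as the closed-loop Lyapunov identity
$$X=(A+BK)X(A+BK)^T+\sum_{i=1}^M A_iXA_i^T+B\Pi B^T+I.$$
Applying the static linear feedback $u_k=Kx_k$, the true state covariance $X_k=\E{x_kx_k^T}$ obeys the same recursion without the $B\Pi B^T$ term. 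Subtracting the two identities and inducting from $X_0=0$ yields $X-X_{k+1}\succeq 0$ whenever $X-X_k\succeq 0$, so $X_k\preceq X$ for all $k$, and the system is mean square stable under this feedback.

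For necessity, assume the system is mean square stabilizable by some linear feedback $u_k=Kx_k$, so that $\sup_k\E{x_k^Tx_k}<\infty$ and consequently $\sup_k\E{u_k^Tu_k}<\infty$ as well. Take auxiliary weights $C=\left[\begin{matrix} I_n\\0\end{matrix}\right]$ and $D=\left[\begin{matrix} 0\\I_m\end{matrix}\right]$, so that $\left[\begin{matrix} C & D\end{matrix}\right]^T\left[\begin{matrix} C & D\end{matrix}\right]=I\succ 0$ and $\E{z_k^Tz_k}=\E{x_k^Tx_k}+\E{u_k^Tu_k}$ is bounded uniformly in $k$. The cost in (\ref{lqproblem}) is then finite under this controller, and Theorem \ref{stat2} equates this with finiteness of the optimal value of the SDP (\ref{optcon}); in particular the SDP is feasible, producing a $V\succeq 0$ satisfying the desired equation.

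The main obstacle lies in the necessity direction: if mean square stabilizability is taken to mean existence of an arbitrary causal, possibly nonlinear, controller that bounds $\E{x_k^Tx_k}$, one must first reduce to a linear stabilizer with bounded input second moment before Theorem \ref{stat2} applies. This reduction is standard in the linear-quadratic-with-multiplicative-noise literature but deserves explicit comment. The sufficiency direction, by contrast, is essentially automatic once the Schur complement decomposition of $V$ is written down, reducing to a routine monotone Lyapunov induction.
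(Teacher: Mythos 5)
Your proposal is correct, and it is in fact more detailed than what the paper provides: the paper states this corollary without any proof, treating it as an immediate consequence of Theorem \ref{stat2} (with $Q=\left[\begin{smallmatrix} C & D\end{smallmatrix}\right]^T\left[\begin{smallmatrix} C & D\end{smallmatrix}\right]\succ 0$, the infinite-horizon value is finite iff the averaged second moments are bounded, and Theorem \ref{stat2} identifies finiteness of that value with feasibility of the equality constraint in (\ref{optcon})). Your necessity argument is exactly this implicit route made explicit, with the clean choice $C=\left[\begin{smallmatrix} I_n\\ 0\end{smallmatrix}\right]$, $D=\left[\begin{smallmatrix} 0\\ I_m\end{smallmatrix}\right]$. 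Your sufficiency argument, however, is genuinely different and arguably stronger: instead of invoking the SDP machinery, you extract $K=R^TX^{-1}$ and $\Pi\succeq 0$ from the Schur complement (legitimate, since the equation forces $X\succeq I\succ 0$), rewrite the constraint as the closed-loop identity $X=(A+BK)X(A+BK)^T+\sum_i A_iXA_i^T+B\Pi B^T+I$, and obtain $X_k\preceq X$ for all $k$ by monotone induction. This yields a uniform bound on $\E{x_k^Tx_k}$, not merely boundedness of Ces\`aro averages, and mirrors the controller construction of Theorems \ref{finitehorizon} and \ref{stat2} without relying on their asymptotic-averaging step, which the paper itself leaves somewhat informal. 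The caveat you flag in the necessity direction (reducing an arbitrary causal mean-square stabilizer, possibly with unbounded input second moment, to one with finite LQ cost) is a real subtlety, but it is one the paper also glosses over, since it never formally defines mean square stabilizability; under the usual definition (existence of a stabilizing feedback with bounded state and input second moments) your argument is complete.
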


\section{Optimal State Feedback Control with Arbitrary Quadratic Constraints}
\label{constrainedlq}
In this section, we consider a linear quadratic problem given
by (\ref{lqproblem}), with additional constraints of the form
$$
\mathbf{E}\hspace{1mm}\left[
  \begin{matrix}
    x_k\\
    u_k
  \end{matrix}
\right]^TQ_j \left[
  \begin{matrix}
    x_k\\
    u_k
  \end{matrix}
\right]\leq \gamma_j(k).
$$

We do \textit{not} make any other assumptions
about $Q_j$ except that it is symmetric, $Q_j\in \mathbb{S}^{m+n}$.

Thus, the constrained optimal control problem we are considering in this section is given by
\begin{equation}
\label{constrainedlqproblem}
\begin{aligned}
  \min_{\mu_k}\hspace{2mm}   & \frac{1}{N}
  \sum_{k=0}^{N-1}    \E{z_k^T z_k}\\
\text{s. t.}\hspace{2mm}  x_{k+1} & = \left(A + \sum_{i=1}^M \sigma_k(i) A_i\right)x_k +  B u_k+ w_k \\
							z_k &= Cx_k + Du_k\\
                              \E{\sigma_k(i) w_\ell}&=0, ~\forall i, k, \ell\\
                               \E {w_kw^T_\ell}&=\delta(k-\ell)\cdot I\\
                               \E {\sigma_k(i) \sigma_\ell(i')} &= \delta(i-i')\delta(k-\ell)\\
                               u_k&=\mu_k(x_0,...,x_k)\\
                               \gamma_j(k) &\geq 
                               \mathbf{E}\hspace{1mm}
                               \left[
  				\begin{matrix}
   			 		x_k\\
   				 	u_k
  				\end{matrix}
			\right]^TQ_j\left[
  				\begin{matrix}
    					x_k\\
    					u_k
  				\end{matrix}
			\right], ~~ j = 1, ..., J
\end{aligned}
\end{equation}

\begin{thm}
\label{constraints}
	The optimal control problem (\ref{constrainedlqproblem})
	is equivalent to  the following semidefinite program:

\begin{equation}
	\label{constrainrec}
	\begin{aligned}
		\min_{V_k\succeq 0}\hspace{2mm} &  \frac{1}{N}  \sum_{k=0}^{N-1} 
		\trace{\left[\begin{matrix}
C & D
\end{matrix}\right] V_k
\left[\begin{matrix}
C & D
\end{matrix}\right]^T}\\
		\text{s. t.}\hspace{2mm} 	& FV_{k+1}F^T = \left[\begin{matrix} A & B \end{matrix}\right]
												V_k\left[\begin{matrix} A & B\end{matrix}\right]^T \\
									& \hspace{1.65cm} + \sum_{i=1}^M A_i FV_kF^T A_i^T + I \\
									& \hspace{8mm} \gamma_j(k) \geq \mathbf{Tr}\hspace{1mm}Q_j V_k, ~~ j=1,...,J.&
	\end{aligned}
\end{equation}
where 
$F = \begin{bmatrix} I_n & 0 \end{bmatrix}$.\\
\end{thm}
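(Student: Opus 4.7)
The plan is to reduce the constrained problem to the unconstrained analysis already carried out in Theorem \ref{stat} and Theorem \ref{finitehorizon}, treating the only new ingredient, the quadratic constraints, via the same trace identity used there. First I would observe that for any random vector, $\mathbf{E}\,(x_k^T, u_k^T)\,Q_j\,(x_k, u_k)^T = \mathbf{Tr}\bigl(Q_j V_k\bigr)$, which follows by cyclicity of trace and linearity of expectation, exactly as the objective was rewritten in the proof of Theorem \ref{stat}. This converts each (possibly indefinite) quadratic constraint into the affine constraint $\gamma_j(k) \geq \mathbf{Tr}(Q_j V_k)$ on the covariance sequence.

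Next I would prove the two inequalities between the optimal values. For the forward direction, given any feasible causal controller $\mu_k$ in (\ref{constrainedlqproblem}), the induced covariance sequence $\{V_k\}$ automatically satisfies the dynamics equality (via Equation (\ref{Xk})) and the converted affine constraints, and yields the same objective value as (\ref{constrainrec}) by the trace identity. Hence the SDP value is a lower bound on the control value.

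For the reverse direction, I would mimic the construction in Theorem \ref{finitehorizon}: given an optimal $V_k^\star$ of (\ref{constrainrec}), the Schur complement $\Pi_k^\star := U_k^\star - (R_k^\star)^T (X_k^\star)^{-1} R_k^\star \succeq 0$ is well defined (modulo handling a possibly singular $X_k^\star$ via a pseudoinverse), and I would propose the affine controller
$$
u_k = (R_k^\star)^T (X_k^\star)^{-1} x_k + v_k,
$$
where $v_k$ is zero-mean noise with $\mathbf{E}\,v_k v_k^T = \Pi_k^\star$, independent of $\{w_\ell\}, \{\sigma_\ell(i)\}$ and past $v_\ell$. A direct computation, identical to the one at the end of the proof of Theorem \ref{finitehorizon}, verifies that this controller realises the covariance sequence $\{V_k^\star\}$ exactly. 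Consequently it matches the SDP objective and, crucially, satisfies $\mathbf{E}\,(x_k^T, u_k^T)Q_j (x_k, u_k)^T = \mathbf{Tr}(Q_j V_k^\star) \leq \gamma_j(k)$.

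The main obstacle, and the reason this proof is not a one-line corollary of Theorem \ref{finitehorizon}, is that here the auxiliary noise $v_k$ cannot be discarded. In the unconstrained case, Proposition \ref{v} lets us drop $v_k$ because doing so only lowers $V_k$ in the Löwner order, which can only decrease the trace against the positive semidefinite weight $[C\ D]^T[C\ D]$. With indefinite $Q_j$, however, the map $V_k \mapsto \mathbf{Tr}(Q_j V_k)$ is \emph{not} monotone in the PSD order, so reducing $V_k$ may violate a constraint that was tight at $V_k^\star$. This is precisely why the optimal strategy is genuinely affine rather than linear, consistent with the claims of the Contributions section. Once this subtlety is handled, the equivalence of optimal values and of the control and SDP problems follows.
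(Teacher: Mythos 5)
Your proposal is correct and follows essentially the paper's own route: the same trace/expectation identity converts each quadratic constraint into the linear constraint $\gamma_j(k)\geq \mathbf{Tr}\hspace{1mm}Q_j V_k$ on the covariance matrices, and the remainder mirrors the rewriting in Theorem \ref{stat}. You additionally spell out the achievability direction via the affine controller with injected noise $v_k$ realizing $V_k^\star$ exactly, and correctly note that Proposition \ref{v} cannot be used to drop $v_k$ when the $Q_j$ are indefinite --- content the paper defers to Theorem \ref{constrainedfinitehorizon} rather than including in this proof, so your argument is if anything more complete than the paper's terse ``the rest follows the same lines as Theorem \ref{stat}.''
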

\begin{proof}
Note first that 
$$
\left[
  			\begin{matrix}
   			 	x_k\\
   				 u_k
  			\end{matrix}
		\right]^TQ_j \left[
  			\begin{matrix}
    				x_k\\
    				u_k
  			\end{matrix}
		\right]
$$
is a scalar, so 
$$
		\left[
  			\begin{matrix}
   			 	x_k\\
   				 u_k
  			\end{matrix}
		\right]^TQ_j \left[
  			\begin{matrix}
    				x_k\\
    				u_k
  			\end{matrix}
		\right] = 
		\trace{\hspace{1mm}\left[
  			\begin{matrix}
   			 	x_k\\
   				 u_k
  			\end{matrix}
		\right]^TQ_j \left[
  			\begin{matrix}
    				x_k\\
    				u_k
  			\end{matrix}
		\right]}.
$$
Thus,
\begin{equation}
\label{covtraceder}
	\begin{aligned}
		\gamma_j(k) &\geq 
		\mathbf{E}\hspace{1mm}\left[
  			\begin{matrix}
   			 	x_k\\
   				 u_k
  			\end{matrix}
		\right]^TQ_j \left[
  			\begin{matrix}
    				x_k\\
    				u_k
  			\end{matrix}
		\right]\\
		&=
		\mathbf{E}\hspace{1mm} \trace{\left[
  			\begin{matrix}
   			 	x_k\\
   				 u_k
  			\end{matrix}
		\right]^TQ_j \left[
  			\begin{matrix}
    				x_k\\
    				u_k
  			\end{matrix}
		\right]}\\
		&=
		\mathbf{E} \hspace{1mm}\trace{Q_j\left[
  			\begin{matrix}
    				x_k\\
    				u_k
  			\end{matrix}
		\right]
		\left[
  			\begin{matrix}
   			 	x_k\\
   				 u_k
  			\end{matrix}
		\right]^T}\\
		&= \trace{Q_j V_k}.
	\end{aligned}
\end{equation}
Hence, we may write the quadratic constraints as the linear constraint
\begin{equation}
\label{covtrace}
\mathbf{Tr}\hspace{1mm}Q_j V_k\leq \gamma_j(k),
\end{equation}
for $k=0,...,N-1$, $j=1,...,J$. The rest of the proof follows the same lines as Theorem \ref{stat}.\\
\end{proof}

\begin{rem}
Note that the covariance constraints 
$$\mathbf{Tr}\hspace{1mm}Q_j V_k\leq \gamma_j(k)$$
are linear in the elements of the covariance matrices $\{V_k\}$, and hence
convex. This shows that the covariance formulation turns the original non convex optimal control problem into a convex one.\\
\end{rem}

\begin{thm}
\label{constrainedfinitehorizon}
Let  
$$
V_k^\star = \left[\begin{matrix}
			X_k^\star & R_k^\star\\
			(R_k^\star)^T & U_k^\star
		\end{matrix}\right]
$$
be a solution to

\begin{equation}
	\label{constrainedoptcon}
	\begin{aligned}
		\min_{V_k\succeq 0}\hspace{2mm} & \frac{1}{N}\sum_{k=0}^N
		\trace{\left[\begin{matrix}
C & D
\end{matrix}\right] V_k
\left[\begin{matrix}
C & D
\end{matrix}\right]^T}\\
		\text{s. t.}\hspace{2mm} 	& FV_{k+1}F^T = \left[\begin{matrix} A & B\end{matrix}\right]
												V_k\left[\begin{matrix} A & B\end{matrix}\right]^T \\
									& \hspace{1.65cm} + \sum_{i=1}^M A_i FV_kF^T A_i^T + I \\																		& \hspace{8mm} \gamma_j(k) \geq \mathbf{Tr}\hspace{1mm}Q_j V_k, ~~ j=1,...,J.&\\
	\end{aligned}
\end{equation}
with $F = \begin{bmatrix} I_n & 0 \end{bmatrix}$. Then, an optimal solution to the optimization problem (\ref{constrainedlqproblem}) 
is given by
$$
u_k = (R_k^\star)^T (X_k^\star)^{-1} x_k + v_k.
$$
with 
$$\E{v_kv_k^T} =  \Pi_k^\star$$ 
and 
$$
\Pi_k^\star = U_k^\star - (R_k^\star)^T (X_k^\star)^{-1} (R_k^\star).
$$
\end{thm}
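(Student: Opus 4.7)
The plan is to reuse the construction in the proof of Theorem \ref{finitehorizon}, but this time to keep the auxiliary noise $v_k$ in the feedback law rather than discard it. By Theorem \ref{constraints}, the constrained problem (\ref{constrainedlqproblem}) has the same optimal value as the SDP (\ref{constrainedoptcon}), and every admissible controller whose joint covariance trajectory matches a feasible $\{V_k\}$ of the SDP realizes the same cost and the same constraint moments $\mathbf{Tr}(Q_j V_k)$. So it suffices to exhibit an admissible controller whose joint covariance at each time $k$ equals the SDP-optimal $V_k^\star$.

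First, because $V_k^\star \succeq 0$, the Schur complement gives
$$
\Pi_k^\star = U_k^\star - (R_k^\star)^T (X_k^\star)^{-1} R_k^\star \succeq 0,
$$
so I can take $v_k$ independent of $\{w_\ell,\sigma_\ell(i),v_\ell\}_{\ell\neq k}$ with $\E{v_k v_k^T} = \Pi_k^\star$. Setting $u_k = (R_k^\star)^T (X_k^\star)^{-1} x_k + v_k$, a direct block-by-block expansion (identical to the one at the end of the proof of Theorem \ref{finitehorizon}) shows
$$
\mathbf{E}\!\left[\begin{matrix} x_k \\ u_k \end{matrix}\right]\!\left[\begin{matrix} x_k \\ u_k \end{matrix}\right]^T = V_k^\star .
$$
The verification proceeds by induction on $k$: the base case $k=0$ is immediate since $x_0 = 0$ forces $X_0^\star = 0$ and $R_0^\star = 0$; for the inductive step, equation (\ref{Xk}) together with the SDP equality constraint yields $X_{k+1} = FV_{k+1}^\star F^T$, while the feedback gain and the independent additive noise fill in the cross block $R_{k+1}^\star$ and the bottom-right block $U_{k+1}^\star$.

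Since both the cost $\trace{[C\ D]V_k[C\ D]^T}$ and the quadratic constraint functionals $\mathbf{Tr}(Q_j V_k)$ depend only on $V_k$, the constructed controller is admissible, attains the SDP optimum and respects every quadratic constraint, which by Theorem \ref{constraints} is exactly the optimum of (\ref{constrainedlqproblem}).

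The main obstacle, and the reason this proof diverges from that of Theorem \ref{finitehorizon}, is that Proposition \ref{v} can no longer be used to strip $v_k$ off the controller: removing $v_k$ would strictly decrease $U_k$ while leaving the other blocks unchanged, thereby perturbing $\mathbf{Tr}(Q_j V_k)$, and since each $Q_j$ is allowed to be indefinite this perturbation might improve the cost yet violate one of the constraints $\gamma_j(k) \geq \mathbf{Tr}(Q_j V_k)$. The extra noise is therefore essential, not redundant. A minor technical point is that $X_k^\star$ may be singular for small $k$ (e.g., at $k=0$, where $X_0^\star = 0$); this is handled by interpreting $(X_k^\star)^{-1}$ as a Moore--Penrose pseudo-inverse and noting that $V_k^\star \succeq 0$ forces $R_k^\star$ to lie in the range of $X_k^\star$, so the feedback gain is well defined on the support of $x_k$.
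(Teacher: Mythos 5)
Your proof is correct and follows essentially the same route as the paper's: construct the affine controller $u_k = (R_k^\star)^T(X_k^\star)^{-1}x_k + v_k$ with $\E{v_kv_k^T}=\Pi_k^\star$ given by the Schur complement, verify that the closed loop reproduces the optimal covariance sequence $\{V_k^\star\}$, and conclude optimality via the equivalence in Theorem \ref{constraints}. Your added details (the inductive verification, the pseudo-inverse caveat for singular $X_k^\star$, and the explanation of why Proposition \ref{v} cannot be used to strip off $v_k$ here) merely flesh out steps the paper leaves as ``easy to verify'' or defers to a remark.
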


\begin{proof}
Since $V_k^\star$ is positive semidefinite, the Schur complement of $X^\star$ in $V^\star$ is nonnegative
$$
\Pi_k^\star = U_k^\star - (R_k^\star)^T (X_k^\star)^{-1} (R_k^\star)\succeq 0.
$$
Consider the controller
$$
u_k = (R_k^\star)^T (X_k^\star)^{-1} x_k + v_k,
$$
where 
$$\E{v_kv_k^T} =  \Pi_k^\star$$ 
and $v_k$ is independent of $\sigma_l(i)$ and $w_l$, for all $i$, $k$, and $l$. 
It's easy to verify that the controller gives the covariance sequence $\{V_k^\star\}$, and thus achieves the minimal solution
of (\ref{constrainedlqproblem}) according to Theorem \ref{constraints}. This completes the proof.\\
\end{proof}

\begin{rem}
Note that the structure of the solution to the multi-objective optimal control problem (\ref{constrainedfinitehorizon})
is a little bit different from that of the one with no constraints (\ref{finitehorizon}) because of the additional term $v_k$.
One interpretation to this structure is that if we had the non convex constraint $\E {u_k^Tu_k} \geq \gamma >0$, then
the controller $u_k$ adds an offset $v_k$ in order to be above a certain energy level, although it does neither contribute to
stabilization nor optimization of a convex quadratic objective. \\

Now suppose that all the quadratic constraints 
$$
\mathbf{E}\hspace{1mm}\left[
  \begin{matrix}
    x_k\\
    u_k
  \end{matrix}
\right]^TQ_j \left[
  \begin{matrix}
    x_k\\
    u_k
  \end{matrix}
\right]\leq \gamma_j(k)
$$
are convex in $u_k$ (note that 
they don't need to be convex in $x_k$), then an optimal controller would still be such that $v_k=0$.
To see this, let 
$$
Q_j = 
\left[
  \begin{matrix}
    E_j & G_j\\
    G_j^T & H_j
  \end{matrix}
\right]
$$
with $H_j\succeq 0$ in order for the quadratic form to be convex in $u_k$. Then, 
$$\gamma_j(k) \geq \mathbf{Tr}\hspace{1mm}Q_j V_k = \mathbf{Tr}\hspace{1mm}H_j U_k + \cdots$$
Now consider the controller $u_k = R_k^T X_k^{-1} x_k + v_k$ which has the covariance
$$
U_k = \Pi_k + R_k^T X_k^{-1} R_k,
$$
where $v_k$ is uncorrelated with $x_k$ and $\mathbf{E}\hspace{1mm} v_kv_k^T = \Pi_k$.
Removing $v_k$ implies that the covariance of $u_k$ is given by
$$
U_k = R_k^T X_k^{-1} R_k,
$$
which decreases the value of $\mathbf{Tr}\hspace{1mm}Q_j V_k $ with the amount 
$$\mathbf{Tr}\hspace{1mm}H_j \Pi_k \geq 0,$$ 
and the inequality $\gamma_j(k) \geq\mathbf{Tr}\hspace{1mm}Q_j V_k$ would still hold.\\
\end{rem}

Finally, we state the result for the infinite horizon case.\\

\begin{thm}
\label{constraints3}
The optimal value of  the optimization 
problem (\ref{lqproblem}) as $N\rightarrow \infty$ is equal to the value of the optimization problem

\begin{equation}
	\label{optcon2}
	\begin{aligned}
		\min_{V\succeq 0}\hspace{2mm} & \trace{\left[\begin{matrix}
C & D
\end{matrix}\right] V
\left[\begin{matrix}
C & D
\end{matrix}\right]^T}\\
		\text{s. t.}\hspace{2mm} 	& FVF^T = \left[\begin{matrix} A & B\end{matrix}\right]
												V\left[\begin{matrix} A & B\end{matrix}\right]^T \\
									& \hspace{1.1cm} + \sum_{i=1}^M A_i FVF^T A_i^T + I \\
									& \hspace{8mm} \gamma_j \geq \mathbf{Tr}\hspace{1mm}Q_j V, ~~ j=1,...,J.&\\
	\end{aligned}
\end{equation}
with $F = \begin{bmatrix} I_n & 0 \end{bmatrix}$. Furthermore, if
$$
V^\star = \left[\begin{matrix}
			X^\star & R^\star\\
			(R^\star)^T & U^\star
		\end{matrix}\right]
$$
is an optimal solution to (\ref{optcon}), then $X^\star\succ 0$ and an optimal solution to the optimization 
problem (\ref{lqproblem}) is given by
$$
u_k = (R^\star)^T (X^\star)^{-1} x_k + v_k,
$$
$$\E{v_kv_k^T} =  U^\star - (R^\star)^T (X^\star)^{-1} (R^\star),$$
as $N\rightarrow \infty$.\\
\end{thm}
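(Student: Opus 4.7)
The plan is to combine the finite-horizon reduction from Theorem \ref{constraints} with the asymptotic averaging argument already used in Theorem \ref{stat2}. First I would invoke Theorem \ref{constraints} to replace the constrained stochastic control problem over horizon $N$ by the SDP in the sequence $\{V_k\}$, giving both the recursive covariance equation and the trace inequalities $\gamma_j(k) \geq \mathbf{Tr}\,Q_j V_k$. Since $[C\ D]^T[C\ D]\succ 0$, finiteness of the objective forces the Ces\`aro average $\bar V_N = \tfrac{1}{N}\sum_{k=0}^{N-1} V_k$ to stay bounded, and by compactness of the positive-semidefinite cone intersected with a ball, a subsequence converges to some $V\succeq 0$.

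Next I would sum the equality constraint from (\ref{constrainrec}) over $k=0,\dots,N-1$ and divide by $N$. The left-hand side telescopes to $\tfrac{1}{N}(FV_N F^T - FV_0 F^T)$, which vanishes in the limit because boundedness of the running cost forces $\tfrac{1}{N}V_N\to 0$. The right-hand side is linear in $V_k$, so averaging yields exactly
\[
FVF^T = \left[\begin{matrix} A & B\end{matrix}\right] V \left[\begin{matrix} A & B\end{matrix}\right]^T + \sum_{i=1}^M A_i FVF^T A_i^T + I,
\]
while the inequalities $\gamma_j(k)\geq \mathbf{Tr}\,Q_j V_k$ average to $\gamma_j \geq \mathbf{Tr}\,Q_j V$ (with $\gamma_j = \liminf_N \tfrac{1}{N}\sum_k \gamma_j(k)$), and the cost converges to $\mathbf{Tr}([C\ D] V [C\ D]^T)$. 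Positive definiteness of $X^\star$ drops out of the equality constraint once one notes the $+I$ term on the right, which forces $FV^\star F^T = X^\star \succeq I \succ 0$.

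For the converse direction I would reverse the reduction: given a feasible $V^\star$ of (\ref{optcon2}), I would construct the stationary controller
\[
u_k = (R^\star)^T (X^\star)^{-1} x_k + v_k, \qquad \E{v_k v_k^T} = U^\star - (R^\star)^T (X^\star)^{-1} R^\star,
\]
with $v_k$ an independent white noise sequence uncorrelated with $\{\sigma_\ell(i)\}$ and $\{w_\ell\}$, exactly as in Theorem \ref{constrainedfinitehorizon}. A direct calculation shows that if the closed-loop state is started with covariance $X^\star$, then the joint covariance $V_k$ equals $V^\star$ for all $k$, so the constraints are satisfied period-by-period and the cost attains the optimum of the SDP. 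Asymptotic convergence of the starting covariance to $X^\star$ from any initial condition follows from the stationary equation playing the role of the Lyapunov-like condition in the Corollary following Theorem \ref{stat2}.

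The main obstacle will be the first step: justifying that the limiting $\bar V_N$ is feasible for (\ref{optcon2}) and matches the limit infimum of the finite-horizon costs. Two subtleties must be handled. First, convexity and closedness of the feasibility set of the SDP, together with the boundedness argument, give existence of a cluster point; but showing the cluster point truly satisfies the equality constraint requires the vanishing-boundary-term step above, which in turn depends on coercivity via $[C\ D]^T[C\ D]\succ 0$. Second, the constraint sequence $\{\gamma_j(k)\}$ must be interpreted consistently in the limit; the cleanest statement is for constant $\gamma_j(k)\equiv \gamma_j$, which matches the form written in (\ref{optcon2}). The controller-construction half is then routine given Theorem \ref{constrainedfinitehorizon}.
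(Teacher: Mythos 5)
Your proposal is correct and follows essentially the same route as the paper, which in fact omits the proof entirely, stating only that it ``follows from Theorem \ref{constrainedfinitehorizon} using similar arguments as in Theorem \ref{stat2}''---precisely the combination of the constrained finite-horizon reduction and the Ces\`aro-averaging limit argument you describe. Your write-up simply supplies the details (boundedness of the averages, the vanishing telescoping term, and the stationary controller construction) that the paper's sketch leaves implicit.
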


\begin{proof}
The proof follows from Theorem \ref{constrainedfinitehorizon} using similar arguments as in Theorem \ref{stat2}, and therefore omitted here.
\end{proof}


\section{Numerical Example}
Consider the state feedback problem
\begin{equation}
\begin{aligned}
  \min_{\mu_k}\hspace{2mm}   & \lim_{N \rightarrow \infty } \frac{1}{N}
  \sum_{k=0}^{N-1}    \E{z_k^T z_k}\\
\text{s. t.}\hspace{2mm}  x_{k+1} & = \left(A + \sigma_k A_1\right)x_k +  B u_k+ w_k \\
							z_k &= Cx_k + Du_k\\
                              \E{\sigma_k(i) w_\ell}&=0, ~\forall i, k, \ell\\
                               \E {w_kw^T_\ell}&=\delta(k-\ell)\cdot I\\
                               \E {\sigma_k(i) \sigma_\ell(i')} &= \delta(i-i')\delta(k-\ell)\\
                               u_k&=\mu_k(x_0,...,x_k)\\
                               0 &\geq 
                               \mathbf{E}\hspace{1mm}
                               \left[
  				\begin{matrix}
   			 		x_k\\
   				 	u_k
  				\end{matrix}
			\right]^TQ\left[
  				\begin{matrix}
    					x_k\\
    					u_k
  				\end{matrix}
			\right]
\end{aligned}
\end{equation}
with 
\begin{align*}
A &=
\begin{bmatrix}
	1  &   2\\
     	4  &   1
\end{bmatrix}\\
A_1 &=
\begin{bmatrix}
	0.5 &   0\\
     	0    &   0.5
\end{bmatrix}\\
B &=
\begin{bmatrix}
	1\\
     	1
\end{bmatrix}\\
C &=
\begin{bmatrix}	
     1  &   0\\
     0  &  1\\
     0  &   0
\end{bmatrix}\\
D &=
\begin{bmatrix}
	0 \\
	0 \\
     	1 
\end{bmatrix}\\
Q &=
\begin{bmatrix}
	-4 & 0 & 0 \\
	0 & -4 & 0 \\
     	 0 & 0 & 1 
\end{bmatrix}.
\end{align*}
Solving the semidefinite program (\ref{optcon2}) gives the solution
$$
V^\star=
\begin{bmatrix}		
    58.8   & 131.2 &  -283.7\\
    131.2  &  309.5  & -674.8\\
   283.7   & -674.8  &  1473.1
\end{bmatrix}.
$$
Theorem \ref{constraints3} gives that the optimal controller is given by
$$
u_k = (R^\star)^T (X^\star)^{-1} x_k + v_k,
$$
with
$$
\E{v_kv_k^T} =  U^\star - (R^\star)^T (X^\star)^{-1} (R^\star).
$$
One may verify that 
$$
U^\star - (R^\star)^T (X^\star)^{-1} (R^\star) = 0,
$$
which gives that $v_k=0$ and the optimal controller is given by
\begin{align*}
u_k &= (R^\star)^T (X^\star)^{-1} x_k \\
	&= \begin{bmatrix}		
     		-283.7\\
     		-674.8
	\end{bmatrix}^T  (1473.1)^{-1} \\
	&=
\begin{bmatrix}		
     0.7908   & -2.5155
\end{bmatrix} x_k.
\end{align*}
The optimal value of the cost is $42.9116$. If we remove the last quadratic constraint with respect to $Q$,
we get the optimal cost $23.9361$, which is smaller than in the constrained case as expected.

\section{Conclusions}
We have considered the problem of multi-objective optimization of a system with multiplicative and 
additive noises, with arbitrary probability distributions. We showed that the problem can be converted 
to a semidefinite program where the optimization variable has the interpretation as the covariance matrix
$V$ of the vector $(x_k^T, ~u_k^T)^T$.

We showed that affine controllers are optimal and depend on the optimal solution of the covariance matrix
$V$. Furthermore, the optimal controllers are linear if all the quadratic forms in the constraints are convex in the controller.

We also showed that a necessary and sufficient condition for stabilizability is given by the solvability of a Lyapunov-like equation
in the covariance matrix $V$.

In this paper, we assumed that the additive noise $w_k$ is independent of the multiplicative noise $\sigma_k(i)$, for $i = 1, ..., M$.
Also, we have assumed that the matrix $B$ is deterministic.
However these assumptions may be removed. We could impose correlation between
$w$ and $\sigma(i)$ and also replace $B$ with 
$$B(\sigma) = B + \sum_i \sigma(i)B_i.$$ 
The proofs would follow readily, though giving more complicated expressions that
wouldn't shed more light on the main ideas of the paper.

\section{Acknowledgements}
The author would like to thank Prof. Bassam Bamieh for suggesting the problem.

\bibliography{../ref/mybib}

\begin{thebibliography}{1}
\providecommand{\url}[1]{#1}
\csname url@samestyle\endcsname
\providecommand{\newblock}{\relax}
\providecommand{\bibinfo}[2]{#2}
\providecommand{\BIBentrySTDinterwordspacing}{\spaceskip=0pt\relax}
\providecommand{\BIBentryALTinterwordstretchfactor}{4}
\providecommand{\BIBentryALTinterwordspacing}{\spaceskip=\fontdimen2\font plus
\BIBentryALTinterwordstretchfactor\fontdimen3\font minus
  \fontdimen4\font\relax}
\providecommand{\BIBforeignlanguage}[2]{{%
\expandafter\ifx\csname l@#1\endcsname\relax
\typeout{** WARNING: IEEEtran.bst: No hyphenation pattern has been}%
\typeout{** loaded for the language `#1'. Using the pattern for}%
\typeout{** the default language instead.}%
\else
\language=\csname l@#1\endcsname
\fi
#2}}
\providecommand{\BIBdecl}{\relax}
\BIBdecl

\bibitem{patterson2010convergence}
S.~Patterson, B.~Bamieh, and A.~El~Abbadi, ``Convergence rates of distributed
  average consensus with stochastic link failures,'' \emph{Automatic Control,
  IEEE Transactions on}, vol.~55, no.~4, pp. 880--892, 2010.

\bibitem{athans:1977}
M.~Athans, R.~Ku, and S.~B. Gershwan, ``The uncertainty threshold principle:
  Some fundamental limitations of optimal decision making under dynamic
  uncertainty,'' \emph{IEEE Trans. on Automatic Control}, vol.~22, no.~3, pp.
  491--495, 1977.

\bibitem{ku:1977}
R.~Ku and M.~Athans, ``Further results on the uncertainty threshold
  principle,'' \emph{IEEE Trans. on Automatic Control}, vol.~22, no.~5, pp.
  866--868, 1977.

\bibitem{li:2005}
W.Li, E.~Todorov, and R.~E. Skeleton, ``Estimation and control of systems with
  multiplicative noise via linear matrix inequalities,'' pp. 1811--1816, 2005.

\bibitem{primbs:2009}
J.~A. Primbs and C.~H. Sung, ``Stochastic receding horizon control of
  constrained linear systems with state and control multiplicative noise,''
  \emph{IEEE Trans. on Automatic Control}, vol.~54, no. 221-230, pp. 857--881,
  2009.

\bibitem{boyd:elghaoui:1994}
S.~Boyd, L.~E. Ghaoui, E.~Feron, and V.~Balakrishnan, \emph{Linear Matrix
  Inequalities in System and Control Theory}.\hskip 1em plus 0.5em minus
  0.4em\relax Springer, 1994.

\bibitem{elghaoui:1995}
L.~E. Ghaoui, ``State-feedback control of systems with multiplicative noise via
  linear matrix inequalities,'' \emph{IEEE Transactions on Automatic Control},
  vol.~24, no.~3, pp. 223--228, 1995.

\bibitem{yakubovich:1992}
V.~A. Yakubovich, ``Nonconvex optimization problem: The infinite-horizon
  linear-quadratic control problem with quadratic constraints,'' \emph{Systems
  \& Control Letters}, vol.~19, no.~1, pp. 13--22, 1992.

\bibitem{gattami:tac:10}
A.~Gattami, ``Generalized linear quadratic control,'' \emph{IEEE Tran.
  Automatic Control}, vol.~55, no.~1, pp. 131--136, January 2010.

\end{thebibliography}


\end{document}